\newtheorem{thm}{Theorem}
\newtheorem{cor}{Corollary}
\newtheorem{prop}{Proposition}
\theoremstyle{remark}
\newtheorem{rem}{Remark}[section]
\newtheorem*{exa}{Example}
\numberwithin{equation}{section}
\newcommand\nolabel[1]{\nonumber}
\def\G{\mathscr{G}}
\def\Hc{\mathscr{H}}
\def\P{\mathscr{P}}
\newcommand\R{\mathbb{R}}
\newcommand\N{\mathbb{N}}
\newcommand\Q{\mathbb{Q}}
\newcommand\Z{\mathbb{Z}}
\newcommand{\QA}[2][SKIPPED]{
    \ifthenelse{\equal{#1}{SKIPPED}}
        {\mathscr{A}^{[#2]}}
        {\mathscr{A}^{[#2]}|_{#1}}
}
\newcommand{\norm}[1]{\left\| #1 \right\| }
\newcommand{\ceil}[1]{\left\lceil #1 \right\rceil}
\DeclareMathOperator{\CM}{\mathcal{CM}}
\DeclareMathOperator{\M}{\mathscr{M}}
\numberwithin{equation}{section}
\def\eq#1{{\rm(\ref{#1})}}
\def\Eq#1#2{\ifthenelse{\equal{#1}{*}}
  {\begin{equation*}\begin{aligned}[]#2\end{aligned}\end{equation*}}
  {\begin{equation}\begin{aligned}[]\label{#1}#2\end{aligned}\end{equation}}}
\newcommand{\operator}[1]{\mathop{\vphantom{\sum}\mathchoice
{\vcenter{\hbox{\LARGE $#1$}}}
{\vcenter{\hbox{\Large $#1$}}}{#1}{#1}}\displaylimits}
\def\AM#1{\operator{\mathscr{A}^{\!\mbox{\footnotesize $[#1]$}}}}       
\def\PM#1{\operator{\mathscr{P}_{\!\mbox{\footnotesize $#1$}}}}
\def\Mm{\operator{\mathscr{M}}}
\title{Weakening of Hardy property for means}
    \subjclass[2010]{26E60, 26D15}
\keywords{Hardy means, Power means, Gini means, quasi-arithmetic means, Gaussian product, invariant means, bounded operators, selfmapping of $\ell_1$}
\author[P. Pasteczka]{Pawe\l{} Pasteczka}
\address{Institute of Mathematics \\ Pedagogical University of Cracow \\ Podchor\k{a}\.zych str.~2, 30-084 Krak\'ow, Poland}
\email{pawel.pasteczka@up.krakow.pl}
\begin{document}
\maketitle
\begin{abstract}
 The aim of this paper is to find a broad family of means defined on a subinterval of $I \subset [0,+\infty)$ such that
 $$
 \sum_{n=1}^\infty \mathscr{M}(a_1,\dots,a_n) <+\infty \quad\text{ for all }\quad a \in \ell_1(I).$$
 Equivalently, the averaging operator 
$$
(a_1,\,a_2,a_3\,,\dots) \mapsto \big( a_1,\,\mathscr{M}(a_1,a_2),\,\mathscr{M}(a_1,a_2,a_3), \dots\big)
$$
is a selfmapping of $\ell_1(I)$.

 This property is closely related to so-called Hardy inequality for means (which additionally requires boundedness of this operator). In fact we prove that these two properties are equivalent in a family of Gini means and Gaussian product of Power means. Moreover it is shown that this is not the case for quasi-arithmetic means.
\end{abstract}

\section{Introduction}
A mean $\M$ on an interval $I \subset [0,+\infty)$ (that is a function $\M \colon \bigcup_{n=1}^\infty I^n \to I$ satisfying $\min(a) \le \M(a) \le \max(a)$ for every admissible vector $a$) is said to be a \emph{Hardy mean} if there exists a finite constant $C$ such that
\Eq{*}{
\sum_{n=1}^\infty \Mm_{k=1}^n (a_k) \le C \sum_{n=1}^\infty a_n \quad \text{ for all }\quad a \in \ell_1(I),
}
where $\Mm_{k=1}^n(a_k)$ stands for $\M(a_1,\dots,a_n)$. The smallest extended real number $C$ satisfying the inequality above is called a \emph{Hardy constant} of $\M$ and denoted by $\Hc(\M)$.

These definitions were introduced recently by P\'ales-Persson \cite{PalPer04} and P\'ales-Pasteczka \cite{PalPas16}, respectively. In fact they are closely related as a mean is Hardy if and only if its Hardy constant is finite.

On the other hand there are a number of earlier result which can be expressed in terms of Hardy mean and Hardy constant. 
These properties were studied for power means $\P_\alpha$ in a series of papers \cite{Har20,Lan21,Car32,Kno28}. Their result (in a unified form) can be expressed as 
\Eq{*}{
\Hc(\P_\alpha)=
\begin{cases} 
(1-\alpha)^{-1/\alpha}&\alpha \in (-\infty,0) \cup (0,1), \\ 
e & \alpha=0, \\
+\infty & \alpha\in[1,\infty).
\end{cases} 
}

More about the history of the developments related to Hardy-type inequalities is sketched in  surveys by Pe\v{c}ari\'c--Stolarsky 
\cite{PecSto01}, Duncan--McGregor \cite{DucMcG03}, and in a book of 
Kufner--Maligranda--Persson \cite{KufMalPer07}.
Further examples of Hardy means (with known Hardy constant) were given recently by Pasteczka \cite{Pas15b} and P\'ales-Pasteczka \cite{PalPas16,PalPas18a}. Some negative results were obtained in \cite{Pas15c} (see Proposition~\ref{prop:Pas15c} below).

Let us emphasize that Hardy property of a mean $\M$ on $I$ can be expressed in terms of \emph{$\M$-averaging operator} defined by
\Eq{*}{
I^\N \ni (a_1,a_2,\dots) \mapsto\big( a_1,\,\M(a_1,a_2),\, \M(a_1,a_2,a_3),\,\dots\big) \in I^\N\:.
}
Indeed, a mean $\M$ is a Hardy mean if and only if $\M$-averaging operator is a bounded operator from $\ell_1(I)$ to itself. In fact its norm equals $\Hc(\M)$.
Motivated by these preliminaries we will be dealing with a more general definition. Namely, we call a mean $\M$ on $I$ to be a \emph{weak-Hardy mean} if
\Eq{def:wH}{
\sum_{n=1}^\infty \Mm_{k=1}^n (a_k) < +\infty \quad \text{ for all }\quad a \in \ell_1(I).
}
Equivalently, the $\M$-averaging operator is a selfmap of $\ell_1(I)$ (with no boundedness assumption).

\begin{rem}
 Let us observe that $\M$-averaging operator is a selfmapping of $\ell_1(I)$ if and only if the conjugated operator
 \Eq{*}{
 (I^{1/p})^\N \ni (a_1,a_2,a_3,\dots) \mapsto \big( a_1, \M(a_1^p,a_2^p)^{1/p},\M(a_1^p,a_2^p,a_3^p)^{1/p},\dots \big)
 }
 is a selfmapping of $\ell_p(I^{1/p})$; $p \in(1,\,+\infty)$.
 
 In this way the consideration in the present paper can be easily generalized to $\ell_p$ spaces.
 \end{rem}

Obviously, each Hardy mean is a weak-Hardy mean, but in general the converse implication is not valid. In this manner we are interested in families of means where all weak-Hardy means are Hardy one. For example it is easy to verify that it is the case for power means. We prove this property for Gini means (section~\ref{sec:Gini}). On the other hand we show that it is not the case for quasi-arithmetic means (section~\ref{sec:QA}). In general this problem remains open (compare with Remark~\ref{rem:equiv}).

We conclude this paper with some results in a family of quasi-arithmetic means. In particular we prove that in this case weak-Hardy property is determined by values of mean in a neighbourhood of zero.

\subsection{Basic properties of means}
Based on \cite{PalPas16}, let us recall some notions. We say that $\M$ is \emph{symmetric} and \emph{(strictly) increasing} if for all 
$n\in\N$ the $n$-variable restriction $\M|_{I^n}$ is symmetric and (strictly) increasing in each of its variables, respectively. If $I=\R_+$, we can analogously define the notion of homogeneity of $\M$. \emph{Monotonicity} of mean is associated with its increasingness. Finally, the mean $\M$ is called \emph{repetition invariant} if for all $n,m\in\N$
and $(a_1,\dots,a_n)\in I^n$ the following identity is satisfied
\Eq{*}{
  \M(\underbrace{a_1,\dots,a_1}_{m\text{-times}},\dots,\underbrace{a_n,\dots,a_n}_{m\text{-times}})
   =\M(a_1,\dots,a_n).
}


\section{Necessary conditions for weak-Hardy property}

In this section we deliver some necessary condition for the mean to be weak-Hardy. First, we go back to the paper \cite{Pas15c}, where such a result was obtained for Hardy property. 
\begin{prop}[\!\cite{Pas15c}, Theorem 1.1]
\label{prop:Pas15c}
Let $I \subset \R_{+}$ be an interval, $\inf I=0$. Let $\M$ be a mean defined on $I$ and $(a_n)_{n=1}^{\infty}$ be a sequence of numbers in $I$ satisfying $\sum_{n=1}^\infty a_n = +\infty$. If\, $\lim\limits_{n \to \infty} a_n^{-1} \Mm_{k=1}^n(a_k) = +\infty$ then $\M$ is not a Hardy mean.
\end{prop}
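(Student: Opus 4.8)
The natural strategy is a proof by contradiction: assume $\M$ is a Hardy mean, so that $C:=\Hc(\M)<+\infty$, and manufacture from the given sequence a member of $\ell_1(I)$ whose partial Hardy sums grow too fast. Since $(a_n)$ itself is not in $\ell_1(I)$, the first move is to \emph{truncate and re-extend}. Because $I$ is an interval with $\inf I=0$, it contains a sequence $t_1\ge t_2\ge\cdots$ of positive numbers with $\sum_{j=1}^\infty t_j<1$; for each $N\in\N$ set
\Eq{*}{
b^{(N)}_k:=\begin{cases} a_k & k\le N,\\ t_{k-N}& k>N,\end{cases}
}
so that $b^{(N)}\in\ell_1(I)$ and $\sum_{k=1}^\infty b^{(N)}_k<\big(\sum_{k=1}^N a_k\big)+1$.

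The key observation is that the $\M$-averages of $b^{(N)}$ reproduce those of $a$ on the first $N$ indices: for $n\le N$ the first $n$ coordinates of $b^{(N)}$ agree with those of $a$, hence $\Mm_{k=1}^n(b^{(N)}_k)=\Mm_{k=1}^n(a_k)$; moreover every term of the left-hand series of the Hardy inequality is nonnegative since $\M$ takes values in $I\subset\R_+$. Applying the Hardy inequality to $b^{(N)}$ and discarding the tail of the left-hand series yields, for every $N$,
\Eq{*}{
\sum_{n=1}^N \Mm_{k=1}^n(a_k)\le \sum_{n=1}^\infty \Mm_{k=1}^n(b^{(N)}_k)\le C\sum_{k=1}^\infty b^{(N)}_k\le C\Big(\sum_{k=1}^N a_k+1\Big).
}

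For the opposing estimate I would invoke the hypothesis. Writing $\kappa_n:=a_n^{-1}\Mm_{k=1}^n(a_k)$, we have $\Mm_{k=1}^n(a_k)=\kappa_n a_n$ with $\kappa_n\to+\infty$, and an elementary Cesàro-type argument (fix a large threshold $T$, choose $n_0$ with $\kappa_n\ge T$ for $n\ge n_0$, and use $\sum a_k=+\infty$ to absorb the finite initial block) shows that
\Eq{*}{
g(N):=\frac{\sum_{n=1}^N \kappa_n a_n}{\sum_{k=1}^N a_k}\longrightarrow +\infty .
}
Combining this with the previous display gives $\big(g(N)-C\big)\sum_{k=1}^N a_k\le C$ for all $N$; but $g(N)-C\to+\infty$ while $\sum_{k=1}^N a_k\to+\infty$, so the left-hand side is unbounded — a contradiction. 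Hence $\M$ is not a Hardy mean.

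The routine parts are the Cesàro lemma for $g(N)$ and the verification that $b^{(N)}\in\ell_1(I)$. The only conceptually essential ingredient — and the place where both hypotheses $\inf I=0$ and $\sum a_n=+\infty$ are genuinely used — is the truncation device that converts a divergent ``extremal'' sequence into a family of honest $\ell_1(I)$-inputs; once that is set up, the contradiction is immediate.
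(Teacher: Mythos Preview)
Your argument is correct: the truncation device produces legitimate $\ell_1(I)$-inputs, the Hardy inequality applied to $b^{(N)}$ gives $\sum_{n=1}^N\Mm_{k=1}^n(a_k)\le C\big(\sum_{k=1}^N a_k+1\big)$, and the weighted Ces\`aro lemma (that $\kappa_n\to\infty$ and $\sum a_n=\infty$ force $g(N)\to\infty$) is standard and exactly what is needed to close the contradiction.

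There is, however, nothing to compare against in this paper: Proposition~\ref{prop:Pas15c} is quoted from \cite{Pas15c} (Theorem~1.1 there) and is not reproved here; the present paper only uses it as background before stating the weak-Hardy analogue (Theorem~\ref{thm:main}). So your proof stands on its own, and the only comment is that the ordering $t_1\ge t_2\ge\cdots$ of the tail sequence is cosmetic---any $(t_j)\subset I$ with $\sum t_j<1$ would do.
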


Our aim is to establish an analogue of this result for weak-Hardy property. Nevertheless, we need to introduce some technical notation first.

We say that a sequence $(a_n)$ of positive numbers is \emph{nearly increasing} if there exists $\varepsilon>0$ such that for every $m,\,n \in\N$ with $m \le n$ we have $\varepsilon a_m \le a_n$. Notice that nearly increasing sequences inherit some properties which are characteristic for monotone sequences. For example is easy to verify that every such sequence is either divergent or bounded (in fact $\liminf a_n \ge \varepsilon\sup a_n$). On the other hand, bounded sequence is nearly increasing if and only if it is separated from zero. Therefore, this definition is meaningful mostly for divergent sequences.

Having this already introduced our main result reads as follows

\begin{thm}\label{thm:main}
Let $\M$ be a homogeneous and monotone mean defined on $\R_+$. If there exists a sequence $(a_n)$ of positive numbers such that
 \begin{enumerate}
  \item $\sum_{n=1}^\infty a_n=+\infty$, 
  \item a sequence $(a_n^{-1}\Mm_{k=1}^n(a_k))_{n=1}^\infty$ is nearly increasing and divergent,
  \item $\sum_{n=1}^\infty a_n^{1+s} \big(\Mm_{k=1}^n(a_k)\big)^{-s}$ is finite for some $s \in \R_+$,
 \end{enumerate}
then $\M$ is not a weak-Hardy mean.
\end{thm}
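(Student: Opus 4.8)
The plan is to prove the contrapositive of what a weak-Hardy mean requires: we will construct a single sequence $b \in \ell_1(\R_+)$ for which $\sum_{n=1}^\infty \Mm_{k=1}^n(b_k) = +\infty$. The given sequence $(a_n)$ is divergent in the sense that $\sum a_n = +\infty$, so it is \emph{not} in $\ell_1$; the idea is to rescale $(a_n)$ blockwise by homogeneity so that the rescaled sequence becomes summable while the partial means along it remain non-summable. Concretely, I would split $\N$ into consecutive finite blocks $B_1, B_2, \dots$ and on block $B_j$ replace $a_n$ by $\lambda_j a_n$ for a suitable sequence of scaling factors $\lambda_j \downarrow 0$ chosen so that $\sum_{n \in B_j} \lambda_j a_n$ is small (say $\le 2^{-j}$), which is possible because $\sum_{n \in B_j} a_n$ can be made as large as we like by taking $B_j$ long enough — this is where hypothesis (1) enters.

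The key point is to control $\Mm_{k=1}^n$ evaluated on the spliced sequence. By monotonicity of $\M$, for $n$ in block $B_j$ the value $\Mm_{k=1}^n$ of the rescaled sequence is at least $\Mm$ of the sequence obtained by scaling \emph{everything up to $n$} by $\lambda_j$ (since the earlier blocks were scaled by larger factors $\lambda_1 \ge \dots \ge \lambda_{j-1} \ge \lambda_j$), and then by homogeneity this equals $\lambda_j \Mm_{k=1}^n(a_k)$. Hence, summing over $n \in B_j$,
\Eq{*}{
\sum_{n \in B_j} \Mm_{k=1}^n(\text{spliced}) \;\ge\; \lambda_j \sum_{n \in B_j} \Mm_{k=1}^n(a_k).
}
Now I want the right-hand side to be bounded below by a constant (say $\ge 1$), so that summing over $j$ diverges. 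Here is where hypotheses (2) and (3) are used together: hypothesis (3) says $\sum_n a_n^{1+s}(\Mm_{k=1}^n(a_k))^{-s} < \infty$, which forces $\lambda_j$ not to be too small relative to the block, and in fact dictates the admissible choice of $\lambda_j$; meanwhile hypothesis (2), that $(a_n^{-1}\Mm_{k=1}^n(a_k))$ is nearly increasing and divergent, is exactly what lets us relate $\sum_{n \in B_j} \Mm_{k=1}^n(a_k)$ to $\sum_{n \in B_j} a_n$ with a blow-up factor that tends to infinity, so that the product $\lambda_j \cdot \sum_{n \in B_j} \Mm_{k=1}^n(a_k)$ can be kept $\gtrsim 1$ even while $\lambda_j \sum_{n\in B_j} a_n \le 2^{-j}$.

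I would organize the quantitative part as follows: choose the cut-points defining the blocks $B_j = \{N_{j-1}+1, \dots, N_j\}$ and the scalars $\lambda_j$ simultaneously by an inductive/greedy argument. Given $N_{j-1}$, pick $\lambda_j$ small enough that the three ``smallness'' requirements on block $B_j$ hold in the limit, then pick $N_j$ large enough that the corresponding ``largeness'' requirements (divergence of $\sum a_n$ on the block, and the lower bound on $\sum \Mm_{k=1}^n$) are met; the nearly-increasing property guarantees these two sets of requirements are compatible, since it prevents the ratio $a_n^{-1}\Mm_{k=1}^n$ from collapsing within a block. Finally one checks $b := (\lambda_{j(n)} a_n)_n \in \ell_1$ from $\sum_j 2^{-j} < \infty$, and $\sum_n \Mm_{k=1}^n(b_k) \ge \sum_j 1 = +\infty$ from the block estimate, contradicting \eq{def:wH}. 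The main obstacle I anticipate is the bookkeeping in the simultaneous choice of $(\lambda_j, N_j)$: one must verify that hypothesis (3), which is the genuinely new ingredient compared to Proposition~\ref{prop:Pas15c}, is precisely strong enough to make the scaling factors $\lambda_j$ summable-against-the-means while still summing to a divergent series of mean-contributions — balancing the exponent $s$ correctly is the delicate step, and I expect the natural choice is something like $\lambda_j \asymp (\sup_{n \le N_j} a_n^{-1}\Mm_{k=1}^n(a_k))^{-1}$ up to the $s$-dependent correction forced by (3).
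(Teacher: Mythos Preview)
Your block-rescaling outline is workable, but it is a genuinely different route from the paper's, and you have misdiagnosed where the difficulty lies.

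\textbf{What the paper does.} Set $b_n:=a_n^{-1}\Mm_{k=1}^n(a_k)$, so that hypothesis~(3) reads $\sum_n a_n b_n^{-s}<\infty$. The single explicit test sequence is $c_k:=a_k b_k^{-s}$; this lies in $\ell_1$ precisely by~(3). Using that $(b_n)$ is nearly increasing (so $b_k^{-s}\ge \varepsilon^s b_n^{-s}$ for $k\le n$), monotonicity and homogeneity give
\[
\Mm_{k=1}^n(c_k)\;\ge\;\varepsilon^s b_n^{-s}\,\Mm_{k=1}^n(a_k)\;=\;\varepsilon^s a_n b_n^{\,1-s}.
\]
For $s\in(0,1]$ one has $\sum a_n b_n^{1-s}=+\infty$ directly from $\sum a_n=+\infty$ and $b_n\to\infty$, so $(c_k)$ already witnesses the failure of the weak-Hardy property. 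For $s>1$, either $\sum a_n b_n^{1-s}=+\infty$ (done) or this sum is finite, which is exactly hypothesis~(3) with $s$ replaced by $s-1$; one finishes by induction on $\lceil s\rceil$. No blocks, no greedy construction: one line of inequalities and a one-step descent in~$s$.

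\textbf{About your plan.} Your decreasing-scaling estimate
\[
\Mm_{k=1}^n(\lambda_{j(k)}a_k)\ \ge\ \lambda_j\,\Mm_{k=1}^n(a_k)\qquad(n\in B_j)
\]
is correct, and the compatibility you need on each block is just
\(
\sum_{n\in B_j}a_n b_n \ \ge\ 2^{\,j}\sum_{n\in B_j}a_n,
\)
i.e.\ the $a$-weighted average of $b_n$ over $B_j$ exceeds $2^{\,j}$. Since $b_n\to\infty$, this is achievable by pushing $N_j$ far enough to the right; then $\lambda_j:=2^{-j}\big/\sum_{B_j}a_n$ does the job, and enlarging $N_j$ further makes $(\lambda_j)$ decreasing. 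Notice that this uses only hypothesis~(1), homogeneity, monotonicity, and $b_n\to\infty$; hypothesis~(3) plays \emph{no} role, and ``nearly increasing'' is used only insofar as it forces $b_n\to\infty$. So the ``delicate step'' you flag---balancing the exponent~$s$ from~(3)---is a phantom in your approach; your suggested choice $\lambda_j\asymp(\sup_{n\le N_j}b_n)^{-1}$ with an $s$-correction is neither needed nor natural here. The upshot is that your construction, once the confusion about~(3) is removed, actually yields a statement stronger than the theorem, at the cost of a longer and more bookkeeping-heavy argument than the paper's two-line descent.
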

\begin{proof}
 Let $b_n:=a_n^{-1} \Mm_{k=1}^n(a_k)$ and $\varepsilon>0$ be the parameter which appears in the definition of nearly increasingness in the second assumption. We can rewrite (3) in a compact form
\Eq{E:Conv-s}{
\sum_{n=1}^{\infty} a_n b_n^{-s} <+\infty \quad \text{ for some } s>0.
}

Then, as $\M$ is homogeneous, monotone and $(b_n)$ is nearly increasing, we have 
\Eq{E:IAS}{
\sum_{n=1}^{\infty} \Mm_{k=1}^n (a_kb_k^{-s}) 
&\ge \sum_{n=1}^{\infty} \Mm_{k=1}^n (a_k\varepsilon^sb_n^{-s})
= \sum_{n=1}^{\infty} \varepsilon^sb_n^{-s} \Mm_{k=1}^n a_k 
= \sum_{n=1}^{\infty} \varepsilon^sa_n b_n^{1-s}.
}
We prove this theorem by induction with respect to $s$ (or, more precisely, with respect to $\ceil s$ ).

For $s \in (0,1]$, by $\sum_{n=1}^{\infty}a_n=+\infty$; $\lim_{n \to \infty} b_n=+\infty$, and \eq{E:IAS} we get
\Eq{*}{
\sum_{n=1}^{\infty} \Mm_{k=1}^n (a_kb_k^{-s})=+\infty.
}
By property \eq{E:Conv-s} we obtain that $\M$ is not a weak-Hardy mean.

For $s>1$ we obtain that either the sum on the most right hand side of \eq{E:IAS} is infinite and, consequently, $\M$ does  not admit weak-Hardy property or
\Eq{*}{
\sum_{n=1}^\infty a_n b_n^{1-s}<+\infty,
}
which is exactly the third condition with $s$ replaced by $s-1$. By inductive assumption $\M$ is not a weak-Hardy mean in this case too.
\end{proof}

In the special case $a_n=\tfrac1n$ and arbitrary positive $D$ ($D=2/s$), Theorem~\ref{thm:main} implies

\begin{cor}
\label{cor:1}
Let $\M$ be a homogeneous and monotone mean defined of $\R_+$. If $\big(\Mm_{k=1}^n \big(\tfrac nk\big)\big)_{n=1}^\infty$ is nearly increasing, and there exist $C,\,D\in \R_+$ and $n_0 \in \N$ such that
\Eq{E:Le}{
\Mm_{k=1}^n\big(\tfrac1k\big) \ge \frac{C (\ln n)^D}n \qquad \text{for all }n \ge n_0,
}
then $\M$ is not a weak-Hardy mean.
\end{cor}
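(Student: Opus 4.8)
The plan is to deduce Corollary~\ref{cor:1} from Theorem~\ref{thm:main} by making the specific choice $a_n = \tfrac1n$ and choosing $s = 2/D$. With this choice the first hypothesis of Theorem~\ref{thm:main} is immediate, since $\sum_{n=1}^\infty \tfrac1n = +\infty$. For the second hypothesis, observe that by homogeneity $a_n^{-1}\Mm_{k=1}^n(a_k) = n\,\Mm_{k=1}^n(\tfrac1k) = \Mm_{k=1}^n(\tfrac nk)$, which is exactly the sequence assumed to be nearly increasing in the statement; its divergence to $+\infty$ follows from the lower bound \eq{E:Le}, because that bound gives $n\,\Mm_{k=1}^n(\tfrac1k) \ge C(\ln n)^D \to +\infty$.

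Next I would verify the third hypothesis of Theorem~\ref{thm:main}. With $b_n = a_n^{-1}\Mm_{k=1}^n(a_k) = n\,\Mm_{k=1}^n(\tfrac1k)$, the series in condition (3) reads $\sum_{n=1}^\infty a_n b_n^{-s} = \sum_{n=1}^\infty \tfrac1n \big(n\,\Mm_{k=1}^n(\tfrac1k)\big)^{-s}$. Using \eq{E:Le}, for $n \ge n_0$ we have $n\,\Mm_{k=1}^n(\tfrac1k) \ge C(\ln n)^D$, hence $a_n b_n^{-s} \le C^{-s} n^{-1} (\ln n)^{-Ds}$. With the choice $s = 2/D$ this is (up to the constant $C^{-s}$) the term $n^{-1}(\ln n)^{-2}$, and $\sum_{n \ge 2} n^{-1}(\ln n)^{-2} < +\infty$ by the integral test. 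Thus condition (3) holds for this value of $s \in \R_+$, and Theorem~\ref{thm:main} applies to conclude that $\M$ is not a weak-Hardy mean.

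The only genuinely delicate point is bookkeeping about the range of the parameters: one must make sure that $D$ can be an arbitrary positive real (so that $s = 2/D$ ranges over all of $\R_+$ as well), that the finitely many initial terms $n < n_0$ in the series of condition (3) contribute only a finite amount and hence do not affect convergence, and that $b_n$ is well-defined and positive (which it is, as $\Mm_{k=1}^n(\tfrac1k) \ge \min_k \tfrac1k > 0$). None of these requires real work; the substance of the corollary is entirely contained in Theorem~\ref{thm:main}, and the role of \eq{E:Le} is simply to supply a clean sufficient condition under which hypotheses (2) and (3) are simultaneously met. I expect no serious obstacle beyond confirming that the exponent on $\ln n$ produced by the substitution indeed lands in the convergent regime, which the choice $s = 2/D$ arranges.
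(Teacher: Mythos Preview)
Your proposal is correct and follows exactly the approach indicated in the paper: apply Theorem~\ref{thm:main} with $a_n=\tfrac1n$ and $s=2/D$ (the paper writes this as ``$D=2/s$''), then verify the three hypotheses as you do. The paper gives no further detail beyond this one-line remark, so your write-up is in fact more complete than the original.
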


At this place nearly increasingness of the sequence $(\Mm_{k=1}^n \big(\tfrac nk\big))_{n=1}^\infty$ seamed to be the most restrictive condition. Luckily we have the following 
\begin{prop}
Let $\M$ be a homogeneous, monotone, and repetition invariant mean defined of $\R_+$. Then the sequence $(\Mm_{k=1}^n \big(\tfrac nk\big))_{n=1}^\infty$ is nearly increasing (with $\varepsilon=\tfrac12$).
\end{prop}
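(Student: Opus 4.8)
The plan is to show directly that $\Mm_{k=1}^n(\tfrac nk) \ge \tfrac12\,\Mm_{k=1}^m(\tfrac mk)$ whenever $m \le n$, which is exactly nearly increasingness with $\varepsilon = \tfrac12$. The basic idea is that the vector $(\tfrac nk)_{k=1}^n$ should ``dominate'' a suitably padded and rescaled copy of $(\tfrac mk)_{k=1}^m$, so that homogeneity, monotonicity, and repetition invariance combine to give the bound.

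First I would fix $m \le n$ and write $n = qm + r$ with $0 \le r < m$ by Euclidean division. Then I would compare the $n$-tuple $(\tfrac n1, \tfrac n2, \dots, \tfrac nn)$ with a modified $n$-tuple built from $(\tfrac m1,\dots,\tfrac mm)$: replace each entry $\tfrac mk$ by $q+1$ copies of itself (or a mix of $q$ and $q+1$ copies) so that the total count is exactly $n$, and then multiply the whole resulting tuple by the scalar $\tfrac{n}{(q+1)m}$ (or an appropriate factor). By repetition invariance the mean of the padded tuple $(\underbrace{\tfrac m1,\dots}_{},\dots)$ equals $\Mm_{k=1}^m(\tfrac mk)$, and by homogeneity the mean of its scalar multiple is that scalar times $\Mm_{k=1}^m(\tfrac mk)$. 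The key computational claim is then the entrywise inequality: after sorting, the $j$-th largest entry of $(\tfrac nk)_{k=1}^n$ is at least the scalar factor times the $j$-th largest entry of the padded $m$-tuple, for every $j$; monotonicity of $\M$ then yields the desired comparison. One checks the scalar factor is at least $\tfrac12$: since $q+1 \le \tfrac{2n}{m}$ (because $n \ge qm$ gives $n \ge (q+1)m - m$, hence $(q+1)m \le n+m \le 2n$), the factor $\tfrac{n}{(q+1)m} \ge \tfrac12$.

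I expect the main obstacle to be the bookkeeping in the entrywise comparison of the two sorted $n$-tuples, i.e.\ verifying that block $k$ of the padded tuple (the $q$ or $q+1$ copies of $\tfrac{n}{(q+1)m}\cdot\tfrac mk$) is entrywise dominated by the corresponding consecutive entries $\tfrac nj$ of the genuine tuple in the correct ranges of indices $j$. This amounts to an inequality of the shape $\tfrac nj \ge \tfrac{n}{(q+1)m}\cdot\tfrac mk = \tfrac{n}{(q+1)k}$, i.e.\ $(q+1)k \ge j$, for all indices $j$ that get matched to block $k$; choosing the matching so that block $k$ occupies positions $j \in \{(k-1)(q+1)+1,\dots,k(q+1)\}$ (truncated at $n$) makes this transparent, since the largest such $j$ is $k(q+1)$. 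A minor point is handling the remainder $r$ cleanly — one can simply use $q+1$ copies of each of the first $r$ values and $q$ copies of the rest, or, more wastefully but more simply, use $\lceil n/m\rceil$ copies of everything and truncate; either way repetition invariance and monotonicity absorb the discrepancy, and the scalar stays $\ge\tfrac12$.
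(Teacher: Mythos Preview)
There is a genuine gap at the step you flag as ``minor.'' Repetition invariance asserts
\[
\M(\underbrace{x_1,\dots,x_1}_{c},\dots,\underbrace{x_m,\dots,x_m}_{c})=\M(x_1,\dots,x_m)
\]
only when \emph{every} entry is repeated the \emph{same} number $c$ of times. When $m\nmid n$ your padded $n$-tuple uses $q+1$ copies of some entries and $q$ of others, so the assertion ``by repetition invariance the mean of the padded tuple equals $\Mm_{k=1}^m(\tfrac mk)$'' is unjustified, and in general false (e.g.\ for the arithmetic mean $\M(2,2,1)=5/3\ne 3/2=\M(2,1)$). The ``use $\lceil n/m\rceil$ copies of everything and truncate'' alternative has the same defect: deleting entries from a vector is not controlled by monotonicity or repetition invariance. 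Your phrase ``repetition invariance and monotonicity absorb the discrepancy'' is precisely the missing argument, not a replacement for it; and you explicitly identify the entrywise bookkeeping, not this step, as the main obstacle, which suggests the issue went unnoticed.

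The clean repair is to compare vectors of a \emph{common} length $mn$: repeat $(\tfrac nk)_{k=1}^n$ entrywise $m$ times and $(\tfrac mk)_{k=1}^m$ entrywise $n$ times. Now repetition invariance applies honestly to both, and the claim reduces to the elementary inequality $m\lceil j/m\rceil\le 2n\lceil j/n\rceil$ for $1\le j\le mn$, which follows from $m\lceil j/m\rceil\le j+m-1$ and $n\lceil j/n\rceil\ge j$ together with $m\le n$ (and a separate check for $j\le m$). This is exactly the paper's device: the paper runs this length-$pq$ comparison only for $p\le q<2p$ and then reaches general $m\le n$ via the doubling inequality $d_p\le d_{2p}$; but once one works at length $mn$ the argument goes through for all $m\le n$ in one stroke, so your intended route, once patched, is a somewhat more direct variant of the paper's two-step induction.
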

\begin{proof}
Let $d_n:=\Mm_{k=1}^n \big(\tfrac nk\big)$.  
 We prove that $d_m \le 2 d_n$ for all $m \le n$. The proof is divided into two parts
 \begin{enumerate}
  \item[(i)] $d_p \le 2 d_q$ for $p \in \N$ and $q \in \{p,\dots,2p-1\}$,
  \item[(ii)] $d_p \le d_{2p}$ for $p \in \N$.
 \end{enumerate}
Then we can use simple induction to obtain the final assertion.

As the first inequality for $p=q$ is trivial, fix $p \in \N$ and $q \in \{p+1,\dots,2p-1\}$. Consider two sequences of length $pq$:
\Eq{*}{
a=\Big(\frac q{\ceil{k/p}} \Big)_{k=1}^{pq}\qquad\text{ and }\qquad b=\Big(\frac p{\ceil{k/q}} \Big)_{k=1}^{pq}.
}
For $k \le p$ we get $a_k/b_k=\tfrac qp\ge \tfrac12$. Similarly for $k>p$, by $\ceil{k/p} \le 2k/p$, we get
\Eq{*}{
\frac{a_k}{b_k}=\frac{q\ceil{k/q}}{p\ceil{k/p}} \ge \frac{q \cdot k/q}{p \cdot 2k/p}=\frac12.
}
Consequently $b_k \le 2 a_k$ for all $k \in \{1,\dots,pq\}$. Thus, by monotonicty, homogeneity, and repetition invariance of $\M$, we get
\Eq{*}{
d_p=\Mm_{k=1}^p \big(\tfrac pk\big)=\Mm_{k=1}^{pq} b_k \le 
2\Mm_{k=1}^{pq} a_k =
2\Mm_{k=1}^{q} \big(\tfrac qk\big)=2d_q\:,
}
which is (i). 

The second inequality is significantly simpler. Indeed, for every $p \in \N$ we simply obtain
\Eq{*}{
d_p=\Mm_{k=1}^p \big(\tfrac pk\big)=\Mm_{k=1}^{2p} \Big(p \ceil{\tfrac k2}^{-1}\Big)
\le \Mm_{k=1}^{2p} \Big(p \cdot \big(\tfrac k2\big)^{-1}\Big)
= \Mm_{k=1}^{2p} \big(\tfrac {2p}k\big)=d_{2p}\:.
}
At the moment define $s \in \N \cup \{0\}$ and $\theta \in [1,2)$ such that $n= 2^s\theta m$. Then, applying (ii) iteratively and then (i), we obtain
\Eq{*}{
d_m \le d_{2m} \le \dots \le d_{2^sm} \le 2d_{2^s\theta m}=2d_n,
}
what was to be proved.
\end{proof}

Binding this result with Corollary~\ref{cor:1}, we obtain 
\begin{cor} \label{cor:2}
Let $\M$ be a homogeneous, monotone, and repetition invariant mean. If there exist $C,\,D\in \R_+$ and $n_0 \in \N$ such that condition \eq{E:Le} is valid, then $\M$ is not a weak-Hardy mean.
\end{cor}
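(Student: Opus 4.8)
The plan is to simply compose the two previous results, since Corollary~\ref{cor:2} is almost entirely a special case of what has already been established. First I would invoke the Proposition immediately preceding: under the stated hypotheses that $\M$ is homogeneous, monotone, and repetition invariant, the sequence $\big(\Mm_{k=1}^n(\tfrac nk)\big)_{n=1}^\infty$ is nearly increasing (indeed with $\varepsilon=\tfrac12$). This discharges precisely the hypothesis of Corollary~\ref{cor:1} that looked most restrictive, namely the near-increasingness of that sequence.

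Next I would feed this into Corollary~\ref{cor:1}. That corollary requires three things: homogeneity and monotonicity of $\M$ (both assumed here), near-increasingness of $\big(\Mm_{k=1}^n(\tfrac nk)\big)_{n=1}^\infty$ (just supplied by the Proposition), and the growth estimate~\eq{E:Le}, i.e.\ the existence of $C,D\in\R_+$ and $n_0\in\N$ with $\Mm_{k=1}^n(\tfrac1k)\ge C(\ln n)^D/n$ for all $n\ge n_0$ — which is exactly the hypothesis of Corollary~\ref{cor:2}. All hypotheses of Corollary~\ref{cor:1} thus hold, so its conclusion gives that $\M$ is not a weak-Hardy mean, which is the assertion.

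There is essentially no obstacle here — the work has been done in Theorem~\ref{thm:main} (which produces Corollary~\ref{cor:1} via the choice $a_n=1/n$, $s=2/D$) and in the Proposition. The only thing to be careful about is bookkeeping: one must check that the repetition invariance assumed in Corollary~\ref{cor:2} is used \emph{only} to get near-increasingness via the Proposition and is not needed elsewhere in Corollary~\ref{cor:1}, and conversely that Corollary~\ref{cor:1} does not secretly require repetition invariance. A quick glance confirms Corollary~\ref{cor:1} (hence Theorem~\ref{thm:main} with $a_n=1/n$) uses only homogeneity and monotonicity plus the near-increasingness hypothesis, so the logic closes cleanly.

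\begin{proof}
This is an immediate consequence of the two preceding results. Since $\M$ is homogeneous, monotone, and repetition invariant, the previous Proposition guarantees that the sequence $\big(\Mm_{k=1}^n \big(\tfrac nk\big)\big)_{n=1}^\infty$ is nearly increasing. Together with the homogeneity and monotonicity of $\M$ and the validity of~\eq{E:Le}, all hypotheses of Corollary~\ref{cor:1} are satisfied, and therefore $\M$ is not a weak-Hardy mean.
\end{proof}
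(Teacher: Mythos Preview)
Your proposal is correct and matches the paper's approach exactly: the paper states Corollary~\ref{cor:2} immediately after the Proposition with the phrase ``Binding this result with Corollary~\ref{cor:1}, we obtain,'' which is precisely the composition you describe.
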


\section{Applications}
In the subsequent sections we discuss a weak-Hardy property among several families of means.

\subsection{\label{sec:QA}Quasi-arithmetic means}
Quasi-arithmetic means were introduced in series of several simultaneous papers \cite{Kno28,Def31,Kol30,Nag30} in 1920-s/30-s as a generalization of already mentioned family of power means. 
For a continuous and strictly monotone function $f \colon I \to \R$ (hereafter $I$ is an interval and $\CM(I)$ stands for a family of all continuous and monotone functions of $I$) and a vector $a=(a_1,a_2,\dots,a_n) \in I^n$, $n \in \N$ we define 
\Eq{*}{
\QA{f}(a):=f^{-1}\left( \frac{f(a_1)+f(a_2)+\cdots+f(a_n)}{n} \right).
}

For a subinterval $J \subset I$ we denote by $\QA[J]{f}$ the restriction of quasi-arithmetic mean to an interval $J$, i.e. $\QA[J]{f}:= \QA{f}|_{\bigcup_{n=1}^\infty J^n}$.
It is easy to verify that for
$I=\R_+$ and $f=\pi_p$, where $\pi_p(x):=x^p$ if $p\ne 0$ and $\pi_0(x):=\ln x$, the mean $\QA{f}$ coincides with the $p$-th power mean. 

Hardy property for this family was characterized by Mulholland \cite{Mul32} shortly after its formal definition. He proved that $\QA{f}$ is a Hardy mean if and only if there exist $\alpha<1$, and $C>0$ such that $\QA{f}(a) \le C \cdot \P_\alpha(a)$ for every $a \in \bigcup_{n=1}^{\infty} I^n$. Now we turn into weak-Hardy property. First, let us present a result which provides localizability of weak-Hardy property for quasi-arithmetic means.
\begin{thm} \label{thm:wHwH}
Let $I$ be an interval with $\inf I=0$, and $f \in \CM(I)$. If there exist $\varepsilon \in I$, such that $\QA[(0,\varepsilon)]{f}$ is a weak-Hardy mean, then $\QA{f}$ is a weak-Hardy mean.
\end{thm}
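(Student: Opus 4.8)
The plan is to take an arbitrary $a\in\ell_1(I)$, split off a finite initial segment past which all entries lie below $\varepsilon$, and then dominate each partial mean $\QA{f}(a_1,\dots,a_n)$ by a partial mean of one fixed auxiliary sequence that lives entirely in $(0,\varepsilon)$, to which the hypothesis applies. First I would normalize: WLOG $f$ is strictly increasing (else replace $f$ by $-f$, which does not change the mean). Since $\sum a_n<\infty$ we have $a_n\to 0$, so fix $N$ with $a_n<\varepsilon$ for $n\ge N$ (if already $N\le 1$ works, then $a\in\ell_1((0,\varepsilon))$ and we are done, so assume $N\ge 2$); also we may assume every $a_n>0$, replacing $a_n$ by $\max(a_n,p\,2^{-n})$ for a fixed $p\in(0,\varepsilon)$ if necessary, since this only increases the sum $\sum_n\QA{f}(a_1,\dots,a_n)$ by monotonicity of $\QA{f}$ and keeps the sequence in $\ell_1(I)$.

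Next I would record the elementary identity $\QA{f}(a_1,\dots,a_n)=f^{-1}(S_n/n)$ with $S_n:=\sum_{k=1}^n f(a_k)$, and note that $S_n/n\to L:=\lim_{x\to 0^+}f(x)\in[-\infty,\infty)$ by Cesàro summation, using $f(a_n)\to L$ (this uses $a_n\to 0^+$ and continuity of $f$, and holds even when $L=-\infty$). Then, writing $A:=\sum_{k=1}^{N-1}f(a_k)$ and fixing any $\varepsilon'\in(0,\varepsilon)$, I would choose $M\in\N$ with $M>N-1$ and with
$$
\frac{Mf(\varepsilon')-A}{M-N+1}>L .
$$
This is legitimate because, as $M\to\infty$, the left side tends to $f(\varepsilon')$, and $f(\varepsilon')>L$ by strict monotonicity of $f$ (when $L=-\infty$ any admissible $M$ works). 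Since $S_n/n\to L$, the last inequality forces $(M-N+1)S_n\le n\,(Mf(\varepsilon')-A)$ for all $n\ge N_1$, where $N_1\ge N$.

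Now introduce the sequence $b:=(\varepsilon',\dots,\varepsilon',a_N,a_{N+1},\dots)$ consisting of $M$ leading copies of $\varepsilon'$ followed by the tail of $a$; clearly $b\in\ell_1((0,\varepsilon))$. With $m(n):=M+n-N+1$ one computes $\QA{f}(b_1,\dots,b_{m(n)})=f^{-1}\!\big(\tfrac{Mf(\varepsilon')+S_n-A}{M+n-N+1}\big)$, and since $f^{-1}$ is increasing, the inequality from the previous step is exactly equivalent to $\QA{f}(a_1,\dots,a_n)\le\QA{f}(b_1,\dots,b_{m(n)})$ for $n\ge N_1$ (after clearing positive denominators). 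Summing, the terms $n<N_1$ contribute a finite amount, while $\sum_{n\ge N_1}\QA{f}(b_1,\dots,b_{m(n)})$ is, since $n\mapsto m(n)$ is strictly increasing and all partial means are positive, a subseries of the convergent series $\sum_m\QA{f}(b_1,\dots,b_m)$, which converges because $\QA[(0,\varepsilon)]{f}$ is weak-Hardy and $b\in\ell_1((0,\varepsilon))$. Hence $\sum_n\QA{f}(a_1,\dots,a_n)<\infty$, so $\QA{f}$ is a weak-Hardy mean.

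I expect the main subtlety to be the choice of the padding length $M$ and treating the two regimes of $f$ near $0$ uniformly: when $f$ stays bounded near $0$ one has $S_n/n\to L$ finite and the comparison constant must be made to strictly exceed $L$ (this is where the freedom in $M$ together with the strict inequality $f(\varepsilon')>L$ is genuinely used), whereas when $f(0^+)=-\infty$ one has $S_n/n\to-\infty$ and any admissible $M$ works with no further care. A second minor point is that the comparison only yields a subsequence $m(n)$ of indices, which is harmless because all the partial means are positive; and the degenerate cases ($0\in I$ with some zero entries, or $N\le 1$) should be disposed of at the very start as indicated above.
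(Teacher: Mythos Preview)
Your argument is correct. Both your proof and the paper's follow the same overall strategy: show via Ces\`aro that the averages $S_n/n$ (equivalently, the partial means) settle down, replace an initial block of $a$ by a constant to obtain an auxiliary sequence $b\in\ell_1((0,\varepsilon))$, dominate $\QA{f}(a_1,\dots,a_n)$ by a partial mean of $b$, and invoke the weak-Hardy hypothesis on the restriction.

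The implementation differs in one point worth noting. You pad with $M$ copies of $\varepsilon'$ and tune $M$ so that the explicit inequality $(M-N+1)S_n\le n\,(Mf(\varepsilon')-A)$ eventually holds; this is a direct, formula-level comparison. The paper instead exploits the \emph{associativity} (nesting) property of quasi-arithmetic means: once $n_0$ is chosen so that $m:=\QA{f}(a_1,\dots,a_{n_0})\le\varepsilon$, one has for every $n>n_0$
\[
\QA{f}(a_1,\dots,a_n)=\QA{f}(\underbrace{m,\dots,m}_{n_0},a_{n_0+1},\dots,a_n)\le \QA{f}(\underbrace{\varepsilon,\dots,\varepsilon}_{n_0},a_{n_0+1},\dots,a_n),
\]
so the comparison with $b=(\varepsilon,\dots,\varepsilon,a_{n_0+1},\dots)$ is immediate, with no need to adjust the padding length or track the two regimes $L$ finite vs.\ $L=-\infty$. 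Your approach is slightly more robust in that it never appeals to this structural property (it would work for any mean given by $f^{-1}$ of an arithmetic average), while the paper's is shorter and avoids the bookkeeping around $M$ and $N_1$.
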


Second, let us establish much stronger result under a bit different assumptions.
\begin{thm} \label{thm:whQA}
Let $I$ be an interval with $\inf I=0$, and $f \in \CM(I)$.
If there exist $\varepsilon \in I$, 
such that $\QA[(0,\varepsilon)]{f}$ is a Hardy mean, then 
there exists a function $c_f \colon I \to \R_+$ such that
\Eq{*}{
\sum_{n=1}^\infty \: \AM{f}_{k=1}^n (a_k) \le c_f\big(\norm{a}_\infty\big) \cdot \norm{a}_1\quad \text{ for all } \quad a \in \ell_1(I).
}
\end{thm}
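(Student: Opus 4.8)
The plan is to split an arbitrary sequence $a \in \ell_1(I)$ into its ``small'' part (entries lying in $(0,\varepsilon)$) and its ``large'' part (entries in $[\varepsilon, \norm{a}_\infty]$), and control the quasi-arithmetic averages of the whole sequence by combining the genuine Hardy inequality available for $\QA[(0,\varepsilon)]{f}$ with a crude bound for the finitely many large entries. First I would fix $a=(a_n) \in \ell_1(I)$ and set $K:=\norm{a}_\infty$. Since $a \in \ell_1$, only finitely many indices $n$ satisfy $a_n \ge \varepsilon$; let $N$ be the largest such index (with $N=0$ if there are none). Because $\QA{f}$ is a mean, $\AM{f}_{k=1}^n(a_k) \le \max(a_1,\dots,a_n) \le K$ for every $n$, so the initial block contributes
\Eq{*}{
\sum_{n=1}^{N} \AM{f}_{k=1}^n(a_k) \le N K \le \frac{K}{\varepsilon} \sum_{n=1}^{N} a_n \le \frac{K}{\varepsilon}\,\norm{a}_1,
}
where the middle inequality uses $a_n \ge \varepsilon$ for the relevant indices in a slightly more careful bookkeeping (alternatively bound $N$ directly by $\varepsilon^{-1}\norm{a}_1$ since the large entries alone already have sum at least $N'\varepsilon$, $N'$ being their count, and pad). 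The point is that this head is $\Err(K/\varepsilon)\cdot\norm{a}_1$.

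For the tail $n>N$, every entry $a_1,\dots,a_n$ is now compared against the small-scale mean. I would first replace each large entry among $a_1,\dots,a_N$ by $\varepsilon$ (monotone push-down is not available since $\QA{f}$ need not be monotone, so instead I use continuity/internality: the average $\AM{f}_{k=1}^n(a_k)$ lies in $[\min_k a_k,\max_k a_k]$, and I estimate it by the quasi-arithmetic mean of a truncated sequence $\tilde a_k:=\min(a_k,\varepsilon')$ for a suitable $\varepsilon'<\varepsilon$, incurring a multiplicative error depending only on how $f$ distorts the interval $[\varepsilon',K]$). Concretely, using that $f$ is continuous and strictly monotone on $I$, for each $n>N$ one writes
\Eq{*}{
f\big(\AM{f}_{k=1}^n(a_k)\big)=\frac1n\sum_{k=1}^n f(a_k) = \frac1n\sum_{k=1}^n f(\tilde a_k) + \frac1n\sum_{k\le N}\big(f(a_k)-f(\tilde a_k)\big),
}
and the second sum is a fixed quantity bounded by $N\cdot\sup_{[\varepsilon',K]}\abs{f}$, divided by $n$; feeding this back through $f^{-1}$ and using uniform continuity of $f^{-1}$ on compact subranges yields $\AM{f}_{k=1}^n(a_k) \le C_1(K)\cdot \AM{f}_{k=1}^n(\tilde a_k) + C_2(K)/n\cdot(\text{something summable})$ type control. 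Then $(\tilde a_k)$ is a sequence in $(0,\varepsilon)$, so the Hardy property of $\QA[(0,\varepsilon)]{f}$ gives $\sum_n \AM{f}_{k=1}^n(\tilde a_k) \le \Hc(\QA[(0,\varepsilon)]{f})\cdot\sum_k \tilde a_k \le \Hc(\QA[(0,\varepsilon)]{f})\cdot\norm{a}_1$. Collecting the head estimate, the distortion constants, and this Hardy bound produces a function $c_f(K)$ of the form $\varepsilon^{-1}K + C_1(K)\Hc(\QA[(0,\varepsilon)]{f}) + (\text{lower order in }K)$, which depends only on $K=\norm{a}_\infty$ as required.

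The main obstacle I anticipate is precisely the failure of monotonicity of a general quasi-arithmetic mean: one cannot simply say ``decrease the big entries and the mean decreases.'' The truncation-plus-distortion argument sketched above is the device to get around this, and making the error term genuinely summable (rather than merely bounded) is where care is needed — one must ensure that replacing $a_k$ by $\tilde a_k$ for $k\le N$ only affects the averages at scales $n$ where the perturbation $1/n\cdot\sum_{k\le N}(f(a_k)-f(\tilde a_k))$ is small relative to the ``signal'' $\frac1n\sum_k f(\tilde a_k)$, or else absorb the finitely many bad terms $n\in(N,N']$ into the head estimate by the same internality bound $\AM{f}_{k=1}^n(a_k)\le K$. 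A clean way to organize this is to prove the inequality first for finitely supported $a$ (where all sums are finite and $N$ is honestly finite), obtain a constant independent of the support size, and then pass to the limit by monotone convergence. If the original theorem's proof in the paper instead proceeds by a direct comparison $\QA{f}(a)\le c\cdot \QA[(0,\varepsilon)]{f}(\text{rescaled }a)$ or by a change of generating function, that would be a slicker route, but the truncation argument above is the one I would reach for.
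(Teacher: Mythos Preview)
Your stated ``main obstacle'' is a phantom: every quasi-arithmetic mean \emph{is} monotone in each variable (if $f$ is increasing, so is $f^{-1}$, and raising $a_i$ raises the average of the $f(a_k)$ and hence the mean; if $f$ is decreasing, the two reversals cancel). This misconception sends you down an unnecessarily hard path, and the resulting sketch has two real gaps. First, your head bound needs $N\le c(K)\norm{a}_1$, but $N$ is the \emph{largest index} of a large entry, not their \emph{count}: take $a_1=a_N=\varepsilon$ with all intermediate $a_k$ tiny, and $N$ is arbitrary while $\norm{a}_1\approx 2\varepsilon$. Second, in the tail your distortion identity gives $f\big(\AM{f}_{k=1}^n(a_k)\big)=f\big(\AM{f}_{k=1}^n(\tilde a_k)\big)+\delta_n$ with $|\delta_n|$ of order $N'/n$ (where $N'$ is the count of large entries), and passing back through $f^{-1}$ yields a factor depending on $N'$, not only on $K$ --- already for $f=\ln$ the ratio is $e^{\delta_n}$, uniformly bounded only once $n\gg N'$. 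Since $N'$ can be as large as $\norm{a}_1/\varepsilon$, this does not deliver a bound of the shape $c_f(\norm{a}_\infty)\norm{a}_1$ without a further argument that you do not supply.

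The paper's proof instead exploits monotonicity together with the \emph{associativity} of quasi-arithmetic means, namely $\QA{f}(a_1,\dots,a_n)=\QA{f}(m,\dots,m,a_{n_0+1},\dots,a_n)$ with $m:=\QA{f}(a_1,\dots,a_{n_0})$. After first reducing (by a rearrangement lemma) to nonincreasing $(a_n)$, so that $a_n\le\norm{a}_1/n$, one lets $n_0$ be the first index beyond which the \emph{running mean} $\AM{f}_{k=1}^n(a_k)$ stays $\le\varepsilon$. For $n>n_0$ associativity replaces the initial block by $m\le\varepsilon$ and monotonicity then bumps each of those copies up to $\varepsilon$, giving $\AM{f}_{k=1}^n(a_k)\le\AM{f}_{k=1}^n(b_k)$ with $b$ a sequence lying entirely in $(0,\varepsilon]$; now the Hardy hypothesis on $\QA[(0,\varepsilon)]{f}$ applies verbatim to $\sum_n\AM{f}_{k=1}^n(b_k)$. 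The substantive estimate --- and the place where the dependence on $\norm{a}_\infty$ alone is actually won --- is that $n_0\le\Phi(\norm{a}_\infty)\cdot\norm{a}_1$, obtained by majorising $a_k$ by $\min(\norm{a}_1/k,\norm{a}_\infty)$ and comparing the resulting mean with a weighted two-point quasi-arithmetic mean of $\norm{a}_\infty$ and $\tfrac{\varepsilon}{2}$. Your truncation scheme never isolates or proves this step.
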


Proofs of these theorems are postponed until section~\ref{sec:wHwH} and \ref{sec:whQA}, respectively. 

In fact our conjecture is that weak-Hardy property of quasi-arithmetic mean is equivalent to the fact that its restriction to some interval $(0,\varepsilon)$ (for $\varepsilon \in I$\:) is a Hardy mean. It is worth mentioning that this property does not depend on a choice of $\varepsilon$. More precisely we have the following result.
\begin{cor} \label{cor:whQA}
Let $I$ be an interval with $\inf I=0$, and $f \in \CM(I)$. If there exist $\varepsilon \in I$ such that $\QA[(0,\varepsilon)]{f}$ is a Hardy mean
then $\QA[(0,s)]{f}$ is a Hardy mean for all $s \in I$. 
\end{cor}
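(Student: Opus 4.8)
The plan is to read Corollary~\ref{cor:whQA} off directly from Theorem~\ref{thm:whQA}. Fix $s\in I$. Since $\inf I=0$, since $s\in I$, and since $I$ is an interval, every $x$ with $0<x\le s$ lies in $I$; hence $(0,s)\subset I$, so $\ell_1\big((0,s)\big)\subset\ell_1(I)$, and for $a\in\ell_1\big((0,s)\big)$ the sequence $\big(\AM{f}_{k=1}^n(a_k)\big)_n$ is the same whether one regards it as produced by $\QA{f}$ or by its restriction $\QA[(0,s)]{f}$. By hypothesis $\QA[(0,\varepsilon)]{f}$ is a Hardy mean, so Theorem~\ref{thm:whQA} supplies a function $c_f\colon I\to\R_+$ with
\Eq{*}{
\sum_{n=1}^\infty \AM{f}_{k=1}^n(a_k)\le c_f\big(\norm{a}_\infty\big)\cdot\norm{a}_1\qquad\text{for all }a\in\ell_1(I).
}
Restricting to $a\in\ell_1\big((0,s)\big)$ and using $\norm{a}_\infty\le s$, the right-hand side is at most $\big(\sup_{t\in(0,s]}c_f(t)\big)\,\norm{a}_1$.

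It then remains only to see that $C_s:=\sup_{t\in(0,s]}c_f(t)$ is finite; once this is known, $\QA[(0,s)]{f}$ is a Hardy mean with $\Hc\big(\QA[(0,s)]{f}\big)\le C_s$, which is the assertion. I would secure finiteness by inspecting the construction of $c_f$ in the proof of Theorem~\ref{thm:whQA}: that estimate is assembled from monotone ingredients (the finite Hardy constant of $\QA[(0,\varepsilon)]{f}$ together with comparisons of the generator $f$ on shrinking one-sided neighbourhoods of $0$), so $c_f$ may be taken nondecreasing; equivalently, one may simply replace $c_f$ by $t\mapsto\sup_{u\in I,\ u\le t}c_f(u)$, which remains everywhere finite. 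With $c_f$ nondecreasing one has $C_s=c_f(s)<\infty$, and the proof is complete.

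I expect this monotonicity (or merely local boundedness) of $c_f$ to be the only real point; everything else is the elementary observation that $(0,s)\subset I$ and that a Hardy-type inequality valid on $\ell_1(I)$ restricts to $\ell_1\big((0,s)\big)$ with an unchanged constant. Note that the content lies in the range $s>\varepsilon$, since for $s\le\varepsilon$ the mean $\QA[(0,s)]{f}$ is a restriction of the already-Hardy mean $\QA[(0,\varepsilon)]{f}$ and the claim is trivial. If one prefers not to invoke the internals of Theorem~\ref{thm:whQA}, an alternative is to use Mulholland's criterion: the hypothesis gives $\QA{f}(a)\le C\,\P_\alpha(a)$ for some $\alpha<1$ on tuples drawn from $(0,\varepsilon)$, and one must then upgrade this comparison to tuples drawn from $(0,s)$; because $s$ may exceed $\varepsilon$ this again needs a monotonicity argument on the generator, so it is no shorter than the route above.
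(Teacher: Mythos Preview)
Your route and the paper's diverge precisely at the point you flag as ``the only real point.'' Theorem~\ref{thm:whQA} as stated gives you a function $c_f\colon I\to\R_+$ with no regularity, so $\sup_{t\in(0,s]}c_f(t)<\infty$ is not available from the theorem as a black box. Your first remedy (inspect the proof of Theorem~\ref{thm:whQA} to see that $c_f$ can be taken nondecreasing) is feasible---the explicit formula at the end of \S\ref{sec:whQA} is built from $K$ and $\Phi$, and one can argue these are monotone in $t$---but you have not actually carried it out, only asserted it. Your second remedy (``replace $c_f$ by $t\mapsto\sup_{u\le t}c_f(u)$, which remains everywhere finite'') is circular: the finiteness of that supremum is exactly the claim in question.

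The paper sidesteps the issue entirely with a trick that uses only the single value $c_f(s)$. For $v\in\ell_1(0,s)$ with $\norm{v}_\infty>\varepsilon$, prepend an artificial term $v_0:=s$; the augmented sequence has $\ell_\infty$-norm exactly $s$, so Theorem~\ref{thm:whQA} gives a bound with constant $c_f(s)$, and monotonicity of $\QA{f}$ lets you compare the original partial means to those of the augmented sequence. The cost of the extra term is absorbed via $s\le\tfrac{s}{\varepsilon}\norm{v}_\infty\le\tfrac{s}{\varepsilon}\norm{v}_1$, yielding a Hardy constant of at most $(1+\tfrac{s}{\varepsilon})c_f(s)$. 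This keeps Theorem~\ref{thm:whQA} as a black box and avoids any appeal to the internal structure of $c_f$; your approach, by contrast, buys nothing over this and obliges you to reopen the proof of the theorem you are citing.
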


\begin{proof}
If $s\le \varepsilon$ the statement is trivial. From now on assume that $s>\varepsilon$.
By Theorem~\ref{thm:whQA} we know that there exists a constant $C:=c_f(s)$ such that
 \Eq{*}{
\sum_{n=1}^\infty \: \AM{f}_{k=1}^n (a_k) \le C \cdot \norm{a}_1\quad \text{ for all } \quad a \in \ell_1(I)\text{ with } \norm{a}_\infty=s.
 }

Now take $v \in \ell_1(0,s)$. If $\norm{v}_\infty \le \varepsilon$, then we have
\Eq{*}{
\sum_{n=1}^\infty \AM{f}_{k=1}^n(v_k) \le \Hc(\QA[(0,\varepsilon)]{f}) \sum_{n=1}^\infty v_n.
}

For $\norm{v}_\infty \in (\varepsilon,s]$, let us add the artificial element $v_0=s$. Then, as $v_0 \ge v_i$ for all $i \in \N$ we get
\Eq{*}{
\sum_{n=1}^\infty \AM{f}_{k=1}^n(v_k) &\le 
\sum_{n=1}^\infty \AM{f}_{k=0}^n(v_k) \le
\sum_{n=0}^\infty \AM{f}_{k=0}^n(v_k) \le c_f(s) \sum_{n=0}^\infty v_n=sc_f(s)+c_f(s) \sum_{n=1}^\infty v_n\\
&\le c_f(s) \cdot \frac{s}\varepsilon \cdot \sup_{n \in \{1,2,\dots\}} v_n + c_f(s) \sum_{n=1}^\infty v_n \le \big(1+\frac s\varepsilon\big) c_f(s) \sum_{n=1}^\infty v_n.
}
This yields that $\QA{f}$ restricted to $(0,s)$ is a Hardy mean with a Hardy constant majorized by
\Eq{*}{
\max \Big(\Hc(\QA[(0,\varepsilon)]{f}),\, \big(1+\frac s\varepsilon\big) c_f(s) \Big).
}
Thus the proof is ended.
\end{proof}

Let us conclude this section with a simple example that in a family of quasi-arithmetic means not every weak-Hardy mean is a Hardy mean.
\begin{exa}
 Let $f \colon (0,+\infty) \to \R$ be given by
 \Eq{*}{
 f(x):= \begin{cases} \ln x & \text{ if } x \in (0,1], \\ x-1 & \text{ if } x \in (1,\infty).
        \end{cases}
 }
 Obviously, as $\QA{f}$ restricted to $(0,1]$ is a geometric mean ($\P_0$), we get, by Theorem~\ref{thm:whQA}, that $\QA{f}$ is a weak-Hardy mean. We prove that it is not a Hardy mean. 
 
 Indeed, fix $N\in \N$ arbitrarily and define $a_n:=N^2/n^2$. Then we have 
 \Eq{E1:1}{
 \sum_{n=1}^{\infty} \AM{f}_{k=1}^n(a_k) \le 
 \Hc(\QA{f}) \cdot \sum_{n=1}^\infty a_n=
 N^2 \cdot \Hc(\QA{f}) \cdot \frac{\pi^2}{6}.
 }
On the other hand for all $n \le N$ we have $a_n \ge 1$ and,
as $\QA{f}$ restricted to $[1,\infty)$ coincide with arithmetic mean, we obtain
\Eq{*}{
\AM{f}_{k=1}^n(a_k)=\frac{1}{n}\sum_{k=1}^n a_k \ge \frac{a_1}{n}=\frac{N^2}{n} \qquad (n \le N).
}
Thus, using the well known estimation of harmonic sequence we get
\Eq{E1:2}{
 \sum_{n=1}^{\infty} \AM{f}_{k=1}^n(a_k) 
 \ge  \sum_{n=1}^{N} \AM{f}_{k=1}^n(a_k)  
 \ge  \sum_{n=1}^{N} \frac{N^2}{n}  
 \ge N^2 \ln N.
}
If we now combine \eq{E1:1} and \eq{E1:2} we obtain
$
N^2 \ln N \le N^2 \cdot \Hc(\QA{f}) \cdot \tfrac{\pi^2}{6}$,
which simplifies to 
$\Hc(\QA{f}) \ge \tfrac{6}{\pi^2} \ln N$.
Letting $N \to \infty$ we obtain $\Hc(\QA{f})=+\infty$, which proves that $\QA{f}$ is not a Hardy mean.
\end{exa}

\subsection{\label{sec:Gini}Gini means}
Another generalization of Power Means was proposed in 1938 by Gini (cf. \cite{Gin38}). {\it Gini means} is a two-parameters family defined of $\R_{+}$ by the equality ($p,\,q\in \R$)
$$\G_{p,q}(a_1,\ldots,a_n):=
\begin{cases}
\left(\frac{\sum_{i=1}^n a_i^p}{\sum_{i=1}^n a_i^q}\right)^{1/(p-q)} & \textrm{if\ }p \ne q\,,\\
\exp \left(\frac{\sum_{i=1}^{n} a_i^p \ln a_i}{\sum_{i=1}^{n}a_i^p}\right)& \textrm{if\ }p = q\,.
\end{cases}$$
For $q = 0$ one easily identifies here the $p$-th power mean. It is known that $\G_{p,q}=\G_{q,p}$ and Gini means are nondecreasing with respect to $p$ and $q$ (cf. e.g. \cite[p. 249]{Bul03}). Furthermore it was proved \cite{PalPer04,Pas15c} that
\Eq{*}{
\G_{p,q} \text{ is a Hardy mean } \iff \min(p,q) \le 0 \:\text{ and }\: \max(p,q)<1.
}
We prove that weak-Hardy and Hardy property coincide for Gini means that is
\begin{prop}
Gini mean is a weak-Hardy mean if and only if it is a Hardy mean. \end{prop}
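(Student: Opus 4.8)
The plan is to establish the nontrivial direction: if a Gini mean $\G_{p,q}$ is a weak-Hardy mean, then $\min(p,q)\le 0$ and $\max(p,q)<1$; the converse is already recorded above. By the symmetry $\G_{p,q}=\G_{q,p}$ we may assume $p\le q$, so the target is to prove that failure of ``$p\le 0$ and $q<1$'' forces $\G_{p,q}$ not to be weak-Hardy. There are exactly two cases to rule out: (a) $q\ge 1$, and (b) $0<p\le q<1$. Since every Gini mean is homogeneous, monotone, and repetition invariant (it depends only on the empirical distribution of the entries), the natural tool is Corollary~\ref{cor:2}: it suffices in each case to exhibit constants $C,D\in\R_+$ and $n_0\in\N$ with
\Eq{*}{
\Mm_{k=1}^n\big(\tfrac1k\big)=\G_{p,q}\big(1,\tfrac12,\dots,\tfrac1n\big)\ge \frac{C(\ln n)^D}{n}\qquad(n\ge n_0).
}

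First I would treat case (a), $q\ge 1$. Here I would use monotonicity of Gini means in the parameters: $\G_{p,q}\ge\G_{0,q}=\P_{?}$... more precisely, $\G_{p,q}\ge \G_{0,q}$, and when $q\ge 1$ one has $\G_{0,q}(a_1,\dots,a_n)=\big(\sum a_i^q\big)^{1/(q-0)}\cdot\big(\sum a_i^0\big)^{-1/q}\cdots$ — the point being that $\G_{0,q}$ with $q\ge1$ dominates a positive multiple of $\P_1$ is false in general, so instead I would argue directly. For $a_k=1/k$, the denominator $\sum_{k=1}^n a_k^q=\sum k^{-q}$ is bounded (if $q>1$) or is $\sim\ln n$ (if $q=1$), while the numerator $\sum k^{-p}$ is at least $1$; taking $(p-q)$-th roots and being careful with signs, one checks $\G_{p,q}(1,\dots,1/n)$ decays no faster than a constant, or like $(\ln n)^{-1}$ when $q=1$ — in either case comfortably above $C(\ln n)^D/n$. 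So \eq{E:Le} holds and Corollary~\ref{cor:2} applies.

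The substantive case is (b), $0<p\le q<1$, and I expect this to be the main obstacle. Here both $\sum k^{-p}$ and $\sum k^{-q}$ diverge like $n^{1-p}$ and $n^{1-q}$ respectively (up to constants, via comparison with $\int_1^n x^{-t}\,dx=\tfrac{n^{1-t}-1}{1-t}$). Hence
\Eq{*}{
\G_{p,q}\big(1,\tfrac12,\dots,\tfrac1n\big)=\left(\frac{\sum_{k=1}^n k^{-p}}{\sum_{k=1}^n k^{-q}}\right)^{1/(p-q)}\asymp \left(\frac{(1-q)\,n^{1-p}}{(1-p)\,n^{1-q}}\right)^{1/(p-q)}=\someconstant\cdot\big(n^{q-p}\big)^{1/(p-q)}=\someconstant\cdot n^{-1},
\nolabel{}
}
so the estimate \eq{E:Le} is verified with $D=0$ and some $C>0$ (one must do the comparison carefully to get a genuine lower bound of the form $C/n$, not merely an asymptotic equivalence, but this is routine). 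For the borderline subcase $p=q\in(0,1)$ one uses the logarithmic form $\G_{p,p}(1,\dots,1/n)=\exp\big(\sum k^{-p}\ln(1/k)\big/\sum k^{-p}\big)$; estimating $\sum_{k\le n} k^{-p}\ln k\asymp n^{1-p}\ln n$ against $\sum_{k\le n}k^{-p}\asymp n^{1-p}$ gives the exponent $\asymp -\ln n$, hence $\G_{p,p}(1,\dots,1/n)\asymp 1/n$, again fitting \eq{E:Le}. In all subcases Corollary~\ref{cor:2} then yields that $\G_{p,q}$ is not a weak-Hardy mean, completing the proof. The delicate point throughout is turning asymptotic estimates of the partial sums $\sum k^{-t}$ into clean two-sided inequalities valid for all $n\ge n_0$; the integral comparison bounds $\tfrac{n^{1-t}-1}{1-t}\le \sum_{k=1}^n k^{-t}\le 1+\tfrac{n^{1-t}-1}{1-t}$ (for $t\in(0,1)$) are exactly what makes this work.
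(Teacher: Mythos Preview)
Your argument has a genuine gap in case (b), and it comes from two separate problems.

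First, it is not true that every Gini mean is monotone. By Losonczi's results \cite{Los71a,Los71c}, $\G_{p,q}$ is increasing in each variable if and only if $pq\le 0$. In particular, for $0<p\le q<1$ the mean $\G_{p,q}$ fails to be monotone, so neither Theorem~\ref{thm:main} nor Corollary~\ref{cor:2} is available. The same obstruction hits the part of your case (a) where both parameters are positive. Second, even setting monotonicity aside, your estimate in case (b) is too weak for \eq{E:Le}. You correctly obtain $\G_{p,q}(1,\tfrac12,\dots,\tfrac1n)\asymp C/n$, but this is $D=0$, whereas Corollary~\ref{cor:1} (hence Corollary~\ref{cor:2}) needs $D>0$: tracing back to Theorem~\ref{thm:main}, condition (2) requires $a_n^{-1}\Mm_{k=1}^n(a_k)=n\,\G_{p,q}(1,\dots,1/n)$ to diverge, and your own computation shows it tends to a finite constant. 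So $a_n=1/n$ simply cannot serve as a witness here.

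The paper sidesteps both issues in case (b) by abandoning Corollary~\ref{cor:2} and giving a direct counterexample: with $a_n=2^{1-n}$ one has $\sum a_n<\infty$, while for $p\ne q$ in $(0,1)$ a geometric-series computation yields
\[
\G_{p,q}(a_1,\dots,a_n)=\Big(\frac{1-2^{-np}}{1-2^{-nq}}\cdot\frac{1-2^{-q}}{1-2^{-p}}\Big)^{1/(p-q)}\longrightarrow \Big(\frac{1-2^{-q}}{1-2^{-p}}\Big)^{1/(p-q)}>0,
\]
so $\sum_n \G_{p,q}(a_1,\dots,a_n)=\infty$; the diagonal $p=q\in(0,1)$ then follows from $\G_{p,p}\ge\G_{p,p/2}$. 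For case (a) the paper does invoke Corollary~\ref{cor:2}, but only on the \emph{monotone} means $\G_{1,q}$ with $q<0$ (citing \cite{Pas15c} for \eq{E:Le}), and then transfers the failure of weak-Hardy to every $\G_{p,q}$ with $\max(p,q)\ge 1$ via the parameter comparison $\G_{p,q}\le\G_{p',q'}$ for $p\le p'$, $q\le q'$.
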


\begin{proof}
 First, it was proved \cite{Pas15c} that for all $q<0$ a mean $\G_{1,q}$ satisfies inequality \eq{E:Le}. Furthermore, by the results of Losonczi \cite{Los71a,Los71c}, $\G_{p,q}$ is monotone if and only if $pq \le 0$. As both homogeneity and repetition invariance are easy to check therefore, by Corollary~\ref{cor:2}, we obtain that $\G_{1,q}$ is a weak-Hardy mean for no $q<0$.

Consequently, as for every $p \le p'$ and $q\le q'$ we have $\G_{p,q} \le \G_{p',q'}$ we have that $\G_{p,q}$ is not a weak-Hardy mean whenever $\max(p,q) \ge 1$. In other words,
\Eq{GME1}{
\G_{p,q} \text{ is a weak-Hardy mean } \Longrightarrow \max(p,q)<1.
}

At the moment suppose that $p,\, q \in (0,1)$, $p \ne q$. For $a_n:=2^{1-n}$ we have
\Eq{*}{
\G_{p,q}(a_1,\dots,a_n)=\left( \frac{1+2^{-p}+\dots+2^{(1-n)p}}{1+2^{-q}+\dots+2^{(1-n)q}}\right)^{1/(p-q)}=\left( \frac{1-2^{-np}}{1-2^{-nq}} \cdot \frac{1-2^{-q}}{1-2^{-p}}\right)^{1/(p-q)}
}
Thus 
\Eq{*}{
\lim_{n \to \infty} \G_{p,q}(a_1,\dots,a_n) = \left(\frac{1-2^{-q}}{1-2^{-p}}\right)^{1/(p-q)}>0.
}
This shows that $\G_{p,q}$ is not a weak-Hardy mean for $(p,q) \in (0,1)^2$, $p \ne q$. Moreover, for $p \in (0,1)$, easy-to-check inequality $\G_{p,p} \ge \G_{p,p/2}$ implies that $\G_{p,p}$ in not a weak-Hardy mean too. 
These facts jointly with \eq{GME1} yield
\Eq{*}{
\G_{p,q} \text{ is weak-Hardy} 
\Longrightarrow \big( \min(p,q) \le 0 \text{ and } \max(p,q)<1 \big) \Longrightarrow \G_{p,q}\text{ is Hardy}.
}
As the converse implication is trivial, the proof is complete.
\end{proof}

\begin{rem}
We can use the same argumentation to prove that the Gaussian product of Power means has the same property (cf. \cite[section~3.1]{Pas15c} and Corollary~\ref{cor:2} above)
\end{rem}

\begin{rem}\label{rem:equiv}
 It remains an open question how to verify equivalence of Hardy and weak-Hardy property without verifying these properties separately.
\end{rem}

\section{Proofs of Theorem~\ref{thm:wHwH} and Theorem~\ref{thm:whQA}}

At the very beginning of this section let us underline that Theorem~\ref{thm:wHwH} and Theorem~\ref{thm:whQA} are closely related, however none of them is a consequence of the second one.
In fact Theorem~\ref{thm:whQA} provides stronger statement under a more restrictive assumptions. This motivates us to bind two proofs together in a rather unconventional way. 

In the first subsection we provide the proof of Theorem~\ref{thm:wHwH}. Later, we will prove Theorem~\ref{thm:whQA}. However, as  this theorem has a stronger assumptions, all intermediate steps and notations which will be made in section~\ref{sec:wHwH} remain valid in section~\ref{sec:whQA}. Consequently, we may refer to them as it would be an immanent part of the proof.
\subsection{\label{sec:wHwH}Proof of Theorem~\ref{thm:wHwH}}
Take $a \in \ell_1(I)$ arbitrarily.
 If we define $f(0):=\lim_{x \to 0^+} f(x) \in [-\infty,+\infty]$, then, applying Cesaro limit principle, and continuouity of $f^{-1}$ on $f(I \cup\{0\})$, we obtain, as $a_n \to 0$,
 \Eq{*}{
0&=f^{-1}(f(0))=f^{-1}\big(\lim_{n \to \infty} f(a_n)\big) =f^{-1}\Big(\lim_{n \to \infty} \frac{f(a_1)+\dots+f(a_n)}n\Big) \\
&=\lim_{n \to \infty} f^{-1}\Big(\frac{f(a_1)+\dots+f(a_n)}n\Big)
=\lim_{n \to \infty} \QA{f}(a_1,\dots,a_n).
}

Therefore let $n_0 \in \N$ be the smallest natural number such that 
\Eq{*}{
\QA{f}(a_1,\dots,a_n) \le \varepsilon\quad \text{ and } \quad a_n\le \varepsilon\qquad\text{ for all }n\ge n_0.
}

Define the sequence $(b_n)_{n=1}^\infty$ by 
\Eq{E:p0}{
b_n:=\begin{cases}
      \varepsilon & \text{ for } n \le n_0, \\
      a_n & \text{ for }n > n_0.
     \end{cases}
}
Then 
\Eq{E:15}{
\norm{b}_1 \le \varepsilon n_0+\norm{a}_1.
}

Moreover, as $\QA{f}$ is associative and monotone we obtain
\Eq{E:p1}{
\sum_{n=1}^\infty \: \AM{f}_{k=1}^n (a_k) 
&= \sum_{n=1}^{n_0} \: \AM{f}_{k=1}^n (a_k)+\sum_{n=n_0+1}^\infty \: \AM{f}_{k=1}^n (a_k)\\
&\le \sum_{n=1}^{n_0} \: \AM{f}_{k=1}^n (a_k)+\sum_{n=n_0+1}^\infty \: \AM{f}_{k=1}^n (b_k).
}
But $\norm{b_n}_\infty \le \varepsilon$ thus
\Eq{E:p2}{
\sum_{n=1}^\infty \: \AM{f}_{k=1}^n (a_k) \le n_0 \norm{a}_\infty +\sum_{n=1}^\infty \: \AM{f}_{k=1}^n (b_k)<+\infty,
}
which proves that $\QA{f}$ is a weak-Hardy mean.

\subsection{\label{sec:whQA} Proof of Theorem~\ref{thm:whQA}} 
At the very beginning let us recall that all conventions and results from the previous subsection remain valid.

By Mullholand's result, as $\QA{f}|_{(0,\varepsilon)}$ is a Hardy mean, we get that there exists $\alpha<1$, and $C>0$ such that 
\Eq{*}{
\QA{f}(v) \le C \cdot \P_\alpha(v)\quad\text{ for every }\quad v \in \bigcup_{n=1}^{\infty} (0,\varepsilon)^n,}

Therefore we can put $c_f(x):=C \cdot \Hc(\P_\alpha)$ for $x \le \varepsilon$. From now on we assume that $\norm{a}_\infty>\varepsilon$.

Following the idea of \cite[Proposition~3.2]{PalPas16}, we may assume that the sequence $(a_n)$ is nonincreasing i.e. $\norm{a}_\infty=a_1$. Furthermore $a_n \le \frac1n \norm{a}_1$. 
Then $n_0$ is the smallest natural number such that 
\Eq{*}{
\QA{f} (a_1,\dots,a_n) \le \varepsilon\text{ for all }n \ge n_0.
}
Indeed, as $(a_n)$ is nonincreasing, $a_1>\varepsilon$, and quasi-artihmetic mean is strict, we obtain $a_n < \QA{f}(a_1,\dots,a_n) \le \varepsilon$ for all $n \ge  n_0$.

On the other hand, by $\norm{b}_\infty=\varepsilon$, we get
\Eq{*}{
\AM{f}_{k=1}^n (b_k) \le C \cdot \PM\alpha_{k=1}^n (b_k),\quad \text{ for all } n \in \N.
}
As $\P_\alpha$ is a Hardy mean, we obtain (in the spirit of Mulholland \cite{Mul32})
\Eq{E:x2}{
\sum_{n=n_0+1}^\infty \: \AM{f}_{k=1}^n (b_k) \le
\sum_{n=1}^\infty \AM{f}_{k=1}^n (b_k) \le 
C \cdot \sum_{n=1}^\infty \PM\alpha_{k=1}^n (b_k) \le
C \cdot \Hc(\P_\alpha) \norm{b}_1. 
}

Binding this with 
\eq{E:p2}, we obtain
\Eq{E:1}{
\sum_{n=1}^\infty \: \AM{f}_{k=1}^n (a_k) \le 
n_0 \norm{a}_\infty+C \cdot \Hc(\P_\alpha) \norm{b}_1.
}
We now estimate $n_0$. Obviously, as $(a_n)$ is nonincreasing, we have $a_k \le \norm{a}_1/k$ for all $k \in \N$. 
Thus 
\Eq{*}{
\AM{f}\limits_{k=1}^n (a_k) \le 
\AM{f}\limits_{k=1}^n \Big(\min\big(\frac{\norm{a}_1}k,\norm{a}_\infty \big)\Big).
}

Now let $u(s,t)$ ($s \ge t \ge \varepsilon$) be the smallest natural number such that 
\Eq{*}{
\AM{f}\limits_{k=1}^{u(s,t)} \Big(\min\big(\tfrac sk,t \big)\Big) \le \varepsilon.
}
We have
\Eq{*}{
\AM{f}\limits_{k=1}^n (a_k) \le \AM{f}\limits_{k=1}^{n} \Big(\min\big(\tfrac {\norm{a}_1}k,\norm{a}_\infty \big)\Big) \le \varepsilon \quad \text{ for }\quad n \ge u(\norm{a}_1,\norm{a}_\infty).
}
Thus $n_0 \le u(\norm{a}_1,\norm{a}_\infty)$.

Define a weighted quasi-arithmetic mean of two variables
\Eq{*}
{
\QA{f}\big((a_1,a_2),(w_1,w_2)\big):=f^{-1} \Big( \frac{w_1f(a_1)+w_2f(a_2)}{w_1+w_2}\Big)\:.
}
Let $K \colon I \cap (\varepsilon,+\infty) \to (0,+\infty)$ be a unique function such that
\Eq{*}{
\AM{f}\big(\big(t,\tfrac\varepsilon2\big),\big(1,\,K(t)\big)\big) = \varepsilon,\quad t \in I \cap (\varepsilon,+\infty)\:.
}
Furthermore, as $\QA{f}$ is monotone we get, for $n \ge \ceil{\tfrac{2s}{\varepsilon}}$,
\Eq{*}{
\AM{f}\limits_{k=1}^n \big(\min\big(\tfrac sk,t \big)\big) 
&\le 
\QA{f}\big(
\big(t,\tfrac\varepsilon2\big), 
\big(
\ceil{\tfrac{2s}{\varepsilon}},\:n-\ceil{\tfrac{2s}{\varepsilon}}\big)\big)\\
&=\QA{f}
\big(
\big(t,\tfrac\varepsilon2\big), 
\big(
1,\:n \ceil{\tfrac{2s}\varepsilon}^{-1}-1\big)\big).
}
Now we have, for all $n \in \N$ such that $n <  u(s,t)$ and $n \ge \ceil{\tfrac{2s}\varepsilon}$,
\Eq{*}{
\varepsilon \le \AM{f}\limits_{k=1}^n \Big(\min\big(\tfrac sk,t \big)\Big) \le 
\QA{f}
\big(
\big(t,\tfrac\varepsilon2\big), 
\big(
1,\:n \ceil{\tfrac{2s}\varepsilon}^{-1}-1\big)\big)\:.
}
Thus
\Eq{*}{
n \ceil{\tfrac{2s}\varepsilon}^{-1}-1 \le K(t) \quad \text{ for all } 
n\in \N\text{ such that }n <  u(s,t)\text{ and }n \ge \ceil{\tfrac{2s}\varepsilon}\:.
}
In particular for $n:=u(s,t)-1$, we obtain
\Eq{*}{
(u(s,t)-1) \ceil{\tfrac{2s}\varepsilon}^{-1}-1\le K(t) 
\quad\text { or }\quad
u(s,t) \le \ceil{\tfrac{2s}\varepsilon}+1
,}
which implies
\Eq{*}{
u(s,t) &\le (K(t)+1) \ceil{\frac{2s}\varepsilon}+1.
}
If we divide side-by-side by $s$ and take an upper limit as $s \to \infty$ we get
\Eq{*}{
\limsup_{s \to \infty} \frac{u(s,t)}{s} \le \frac{2(K(t)+1)}{\varepsilon}.
}
Thus, as $u$ in nondecreasing with respect to both variables, there exists a function $\Phi \colon (\varepsilon,+\infty) \to \R$ such that
\Eq{*}{
u(s,t) \le \Phi(t)\cdot s \quad\text{ for all } s \ge t\ge \varepsilon.
}
In particular 
$n_0 \le u(\norm{a}_1,\norm{a}_\infty) \le \Phi(\norm{a}_\infty) \cdot \norm{a}_1.$
Combining this inequality with \eq{E:1} and \eq{E:15}, we obtain
\Eq{*}{
\sum_{n=1}^\infty \AM{f}\limits_{k=1}^n (a_k) 
&\le n_0 \norm{a}_\infty+ C\cdot\Hc(\P_\alpha) (\varepsilon n_0+\norm{a}_1)\\
&=
\big(\norm{a}_\infty + \varepsilon C\cdot\Hc(\P_\alpha) \big)\cdot n_0+C\cdot\Hc(\P_\alpha) \cdot \norm{a}_1\\
&\le
\big(\norm{a}_\infty + \varepsilon C\cdot\Hc(\P_\alpha) \big)\cdot \Phi(\norm{a}_\infty) \cdot \norm{a}_1+C\cdot\Hc(\P_\alpha) \cdot \norm{a}_1\\
&=\Big( \big(\norm{a}_\infty + \varepsilon C\cdot\Hc(\P_\alpha) \big)\cdot \Phi(\norm{a}_\infty) + C\cdot\Hc(\P_\alpha) \Big) \cdot \norm{a}_1
}
In order to conclude the proof we take 
\Eq{*}{
c_f(x):=
\begin{cases}
C\cdot\Hc(\P_\alpha) & x \le \varepsilon, \\[2mm]
 \big( x+\varepsilon C\cdot\Hc(\P_\alpha) \big) \cdot \Phi(x) +C\cdot\Hc(\P_\alpha)  & x > \varepsilon.
\end{cases}
}
\def\cprime{$'$} \def\R{\mathbb R} \def\Z{\mathbb Z} \def\Q{\mathbb Q}
  \def\C{\mathbb C}

\end{document}